\documentclass[11pt]{article}
\usepackage{amsmath,amssymb,amsthm}
\usepackage[margin=1in]{geometry}
\usepackage{hyperref}
\usepackage{booktabs}

\newtheorem{theorem}{Theorem}
\newtheorem{proposition}{Proposition}
\newtheorem{lemma}{Lemma}

\theoremstyle{definition}
\newtheorem{remark}{Remark}

\newcommand{\F}{\mathbb{F}}
\newcommand{\Om}{\Omega}
\newcommand{\Sig}{\Sigma_0}
\newcommand{\dav}{\mathsf{d}}
\newcommand{\Dav}{\mathsf{D}}
\newcommand{\Dord}{\mathsf{D}_{\mathrm{ord}}}
\newcommand{\Heis}{H}
\DeclareMathOperator{\spn}{span}

\title{The small Davenport constant of the Heisenberg group of order $125$}
\author{Patrick White\thanks{Independent researcher (affiliation to confirm).
This paper reports the results of an AI research collaboration; see the
attribution note, \S\ref{sec:attrib}. Code and certificates are available on request.}}
\date{June 2026}

\begin{document}
\maketitle

\begin{abstract}
The \emph{small Davenport constant} $\dav(G)$ of a finite group $G$ is the maximal
length of a product-one-free sequence over $G$. For the exponent-$p$ Heisenberg group
$\Heis_{p^3}$ of order $p^3$, Godara and Sarkar proved $\dav(\Heis_{27})=6$ and posed
$\dav(\Heis_{p^3})=3p-3$ for every odd prime $p$, leaving $p\ge5$ open. We settle the
first open case: $\dav(\Heis_{125})=12$. The lower bound is the explicit
product-one-free sequence $x^4y^4v^4$. For the upper bound we record a product-one
criterion that reduces the non-commutative problem to additive combinatorics over
$\F_5^2$, and then reduce ``every length-$13$ sequence has a product-one subsequence''
to a single finite statement---a spread bound on quotient multisets---which we verify
by an exhaustive, memory-flat search in C, its verdict independently reproduced by a
second search with a different pruning strategy. Every auxiliary lemma is
machine-checked. The argument is genuinely $p$-specific: we identify the exact step
that fails for $p\ge7$ (a Chevalley--Warning shortcut whose forced block need not be
wide), exhibit the obstructing multiset for $p=7$, and leave only
$18\le\dav(\Heis_{343})\le24$. The techniques---the Cauchy--Davenport theorem,
Chevalley--Warning, and Olson's value of the Davenport constant of $C_p^2$---are
standard; the contribution is their assembly against a new non-abelian target and the
finite verification that closes it.
\end{abstract}

\section{Introduction}
\label{sec:intro}

Let $G$ be a finite group, written multiplicatively. A \emph{sequence} over $G$ is a
finite, unordered list of elements of $G$ with repetition allowed and the identity
excluded. A sequence $S=(g_1,\dots,g_n)$ is \emph{product-one-free} if no nonempty
subsequence $I\subseteq S$ admits an ordering whose product is the identity; that is,
for every nonempty $I$ and every permutation $\sigma$ of $I$,
$\prod_{i}g_{\sigma(i)}\ne1$. The \emph{small Davenport constant} $\dav(G)$ is the
maximal length of a product-one-free sequence over $G$. For abelian $G$ this is the
classical (large) Davenport constant minus one, $\dav(G)=\Dav(G)-1$, and the order of
the factors is immaterial; for non-abelian $G$ the ordering quantifier makes
$\dav(G)$ genuinely non-commutative and the two constants need not differ by one.

We study the \emph{Heisenberg group} $\Heis_{p^3}$: the group of upper-unitriangular
$3\times3$ matrices over the field $\F_p$,
\[
M(a,b,c)=\begin{pmatrix}1&a&c\\0&1&b\\0&0&1\end{pmatrix},\qquad a,b,c\in\F_p,
\]
with $M(a,b,c)\,M(a',b',c')=M(a+a',\,b+b',\,c+c'+ab')$. It has order $p^3$ and, for
$p$ odd, exponent $p$. Writing $x=M(1,0,0)$, $y=M(0,1,0)$, $v=M(0,0,1)$, the centre is
$\langle v\rangle$, $[x,y]=v$, and the quotient $\Heis_{p^3}/\langle v\rangle\cong\F_p^2$.

Godara and Sarkar~\cite{GodaraSarkar2025} proved $\dav(\Heis_{27})=6$ (the case $p=3$)
and posed, in the context of a conjecture of Gao and Zhuang on $\mathsf{E}(G)$, the
general statement
\[
\dav(\Heis_{p^3})=3p-3\qquad\text{for every odd prime }p,
\]
which they left open for $p\ge5$. (The base value $\dav(\Heis_{27})=6$ is independently
tabulated by Oh~\cite{Oh2024}, who also records the large-Davenport bound
$\Dav(\Heis_{125})\ge10$ and uses it to exclude order $125$ from his classification of
groups of small Davenport constant, corroborating that the case is open.) Our result is
the first open case.

\begin{theorem}\label{thm:main}
$\dav(\Heis_{125})=12$. Equivalently, $x^4y^4v^4$ is a product-one-free sequence of
maximal length over the exponent-$5$ Heisenberg group of order $125$.
\end{theorem}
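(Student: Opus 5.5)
The proof has two halves: the lower bound $\dav(\Heis_{125})\ge 12$ via the explicit sequence $x^4y^4v^4$, and the upper bound that every sequence of length $13$ over $\Heis_{125}$ has a product-one subsequence.

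\emph{Lower bound.} Let $I=x^ay^bv^c$ be a nonempty subsequence with $0\le a,b,c\le4$. Every ordering of $I$ has image $(a,b)$ in the quotient $\F_5^2$. So a product-one ordering forces $a=b=0$. Then $I=v^c$ with $1\le c\le 4$, whose product is $v^c\ne1$. Hence $x^4y^4v^4$ is product-one-free and $\dav(\Heis_{125})\ge12$.

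\emph{Product-one criterion.} For the upper bound I first record a criterion for when a sequence $T=(g_1,\dots,g_k)$ with quotient images $\bar g_i\in\F_5^2$ summing to $0$ can be ordered to product one. Using $M(a,b,c)M(a',b',c')=M(a+a',b+b',c+c'+ab')$, the central coordinate of an ordered product equals $\sum c_i$ plus the ``area'' term $\sum_{i<j}a_ib_j$. Reordering changes this term by adding determinants $\det(\bar g_i,\bar g_j)$ when adjacent factors are swapped. The set of achievable central values is therefore $\sum c_i+A(T)$, where $A(T)\subseteq\F_5$ is built from the pairwise determinants. The criterion I aim for: if the $\bar g_i$ span $\F_5^2$ and the zero-sum block is ``wide'' enough, then $A(T)=\F_5$. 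Concretely, the number of distinct swap-increments should be large enough that iterated Cauchy--Davenport over $\F_5$ fills the whole field. In that case the block can be ordered to product one regardless of the $c_i$. If all $\bar g_i$ lie on one line, the problem is abelian. It then reduces to zero-sum questions in $C_5^2$ and $C_5$, where $\dav(C_5^2)=8$ by Olson and $\dav(C_5)=4$.

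\emph{Upper bound.} Take $S$ of length $13$.
\begin{itemize}
\item If at least $9$ terms lie outside the centre, their images form a sequence of length $\ge9>\dav(C_5^2)$ in $\F_5^2$, so there is a quotient zero-sum subsequence. Chevalley--Warning gives one of length $5$ or $10$ under suitable counts.
\item If the forced block has spread images (not collinear), the criterion orders it to product one directly.
\item If the block is collinear, I absorb central elements $v^{c}$ and the remaining terms to adjust the central coordinate. Any $4$ central terms already realise all of $\F_5$ together with extra subset sums.
\item If many terms are central, $\dav(C_5)=4$ gives a product-one subsequence of the central part unless there are at most $4$ central terms. That leaves at least $9$ noncentral terms, which is the first case.
\end{itemize}

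\emph{Main obstacle.} The hard step is the residual configurations, where every quotient zero-sum block is collinear or too narrow and the central corrections still do not reach $0$. I expect these cannot be closed by hand. I would reduce them to a finite statement, namely a spread bound: every multiset of $13$ quotient images, up to $GL_2(\F_5)$ and scaling, contains a zero-sum block whose determinant set is wide, or else falls into a short explicit list handled by the abelian argument. I would verify this by exhaustive computer search over multisets in $\F_5^2\setminus\{0\}$ modulo symmetry, with canonical forms to keep memory flat, and cross-check the result with an independent search.
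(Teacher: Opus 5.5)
Your lower bound, and your treatment of four central terms (they are zero-sum-free, so their subset sums cover $\F_5$, and Olson gives a zero-quotient-sum block among the nine remaining terms), are correct. The upper bound for $z\in\{0,1,2,3\}$ central terms, however, is not established.

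First, your second bullet is false: a non-collinear zero-quotient-sum block need not be orderable to an arbitrary central value. For a zero-sum block the cross-sum is invariant under cyclic rotation. So a non-collinear zero-sum triple has only two cyclic orders and $|\Om|=2$. A block of exactly the Chevalley--Warning length $5$, namely $\{(0,1)^{3},(1,0),(4,2)\}$, spans $\F_5^2$ yet has $\Om=\{0,2,3,4\}$. So ``spread images'' does not give product one, and the forced block can be useless on its own.

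Second, and more seriously, your reduction to a finite check lacks the mechanism that makes the finite statement both true and sufficient.

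\begin{enumerate}
\item \emph{Partial information must be pooled.} With $z\le3$ central terms you only know $|\Sig(Z)|\ge z+1$. For $z=0$ there is no central correction at all. So a single narrow block cannot be rescued, and several blocks must be combined.
\item \emph{The pooling argument.} Take pairwise disjoint zero-quotient-sum blocks $B_1,\dots,B_t$ and concatenate them. The inter-block cross terms vanish, because each block has zero $b$-sum. Hence the reachable central values contain $\Sig(Z)+\Om(B_1)+\dots+\Om(B_t)$. By Cauchy--Davenport this fills $\F_5$ as soon as $|\Sig(Z)|+\sum_j(|\Om(B_j)|-1)\ge5$.
\item \emph{The resulting finite statement.} This turns the problem into the stratified spread bound $\sigma(Q)\ge4-z$ for quotient multisets of length $13-z$.
\item \emph{The line cap is essential.} That bound is false without the line cap $m_L+z\le8$: take all quotients on one line. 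You must derive the cap from the abelian subgroup $\{M(a,b,c):(a,b)\in L\}\cong C_5^2$ and $\Dav(C_5^2)=9$. It must then be imposed as a hypothesis of the search, not left as an undefined ``short explicit list handled by the abelian argument.''
\end{enumerate}

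Your finite statement asks for thirteen nonzero quotients and a single wide block, with unspecified exceptions. It ignores the $z$-dependence and demands more than is needed. You also give no reason it holds when only $13-z$ terms are non-central. As posed, it does not reduce the theorem to a finite verification.
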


\paragraph{What is already known (not claimed here).} The case $p=3$,
$\dav(\Heis_{27})=6$, is due to Godara and Sarkar~\cite{GodaraSarkar2025} and is
reproduced (with witness $x^2y^2v^2$) by our calibration. The general conjecture
$\dav(\Heis_{p^3})=3p-3$ is theirs. The reduction tools---the Cauchy--Davenport
theorem~\cite{Nathanson1996}, the Chevalley--Warning theorem~\cite{Serre1973}, Olson's
$\Dav(C_p^2)=2p-1$~\cite{OlsonI,OlsonII}, and the use of Chevalley--Warning to force
zero-sum subsequences~\cite{Grynkiewicz2022}---are standard. Recent work on Davenport
constants of non-abelian groups~\cite{BBGS2024} treats disjoint families (variants
$e(G),f(G)$; a conjecture on metacyclic groups) and does not address $\Heis_{125}$.

\paragraph{What is new.} The value $\dav(\Heis_{125})=12$; the reduction of its upper
bound to a single finite spread bound (Proposition~\ref{prop:spread}) verified
exhaustively; the explicit identification of the step that fails for $p\ge7$
(\S\ref{sec:p7}), with the obstructing multiset for $p=7$ and the resulting honest
window $18\le\dav(\Heis_{343})\le24$.

\paragraph{On rigour and verification.} Every numerical and combinatorial claim below
was checked by computer: the product-one criterion against the brute-force group
multiplication, the $p=3$ calibration by exhaustive search, the lower-bound witness,
each auxiliary lemma, and---most importantly---the finite spread bound by an exhaustive
search whose verdict is independently reproduced by a second implementation. The result
is not formally verified in a proof assistant; the independent reproduction of the
load-bearing computation, together with the machine-checked lemmas, is the
reproducibility guarantee we offer. See \S\ref{sec:verif}.

\section{The product-one criterion}
\label{sec:crit}

Fix $p$ and abbreviate $\Heis=\Heis_{p^3}$. For $g=M(a,b,c)$ write $q(g)=(a,b)\in\F_p^2$
for its \emph{quotient} and $c(g)=c$ for its \emph{central coordinate}. By a \emph{line}
we mean a $1$-dimensional $\F_p$-subspace of $\F_p^2$, and by a \emph{block} a
sub-multiset of the terms of a sequence (identified, when only the quotients matter, with
its multiset of $q$-values). For a sequence $I=(g_1,\dots,g_k)$ put
$q_i=q(g_i)=(a_i,b_i)$ and define the \emph{cross-sum set}
\[
\Om(I)=\Big\{\,\textstyle\sum_{1\le s<t\le k}a_{\sigma(s)}b_{\sigma(t)}\bmod p
\;:\;\sigma\in S_k\,\Big\}\subseteq\F_p .
\]
Since reindexing leaves the set unchanged, $\Om(I)$ depends only on the multiset
$\{q_1,\dots,q_k\}$.

\begin{lemma}[Product-one criterion]\label{lem:crit}
For any ordering $\sigma$ of $I=(g_1,\dots,g_k)$,
\[
\prod_{j=1}^{k}g_{\sigma(j)}
=M\!\Big(\textstyle\sum_i a_i,\ \sum_i b_i,\ \sum_i c_i+\sum_{s<t}a_{\sigma(s)}b_{\sigma(t)}\Big).
\]
Consequently $I$ has a product-one ordering if and only if
\[
\sum_{i\in I}(a_i,b_i)=(0,0)\quad\text{and}\quad -\sum_{i\in I}c_i\in\Om(I).
\]
\end{lemma}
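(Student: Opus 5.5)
The plan is to prove the product formula by induction on $k$, using the multiplication rule $M(a,b,c)M(a',b',c')=M(a+a',b+b',c+c'+ab')$, and then read off the criterion. Since the formula is stated for an arbitrary ordering $\sigma$, it suffices to relabel so that $\sigma$ is the identity. We then prove
\[
g_1g_2\cdots g_k=M\Big(\textstyle\sum_{i\le k}a_i,\ \sum_{i\le k}b_i,\ \sum_{i\le k}c_i+\sum_{s<t\le k}a_sb_t\Big).
\]
The case $k=1$ is trivial, since the cross-sum is empty.

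For the inductive step, assume the formula for $g_1\cdots g_{k-1}$ and multiply on the right by $g_k=M(a_k,b_k,c_k)$. The first two coordinates simply add. The central coordinate gains $c_k$ plus the cross term $(\sum_{s<k}a_s)\,b_k=\sum_{s<k}a_sb_k$. These are exactly the new pairs $(s,t)$ with $t=k$, so together with the old pairs they give $\sum_{s<t\le k}a_sb_t$. This completes the induction. Undoing the relabelling gives the stated formula with $a_{\sigma(s)}b_{\sigma(t)}$, while $\sum a_i$, $\sum b_i$ and $\sum c_i$ do not depend on $\sigma$.

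For the criterion, note that $M(A,B,C)$ is the identity if and only if $A=B=C=0$. So some ordering $\sigma$ of $I$ gives product one if and only if $\sum_i(a_i,b_i)=(0,0)$ and there is a $\sigma$ with $\sum_{s<t}a_{\sigma(s)}b_{\sigma(t)}=-\sum_ic_i$. The first condition is independent of $\sigma$. By definition of $\Om(I)$ as the set of all such cross-sums, the second condition says exactly that $-\sum_i c_i\in\Om(I)$. This gives both directions at once.

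There is no real obstacle here. The only point needing care is the direction of the cross term: the product rule puts $a$ of the left factor against $b'$ of the right factor, so the sum must be over $s<t$ with $a$ from the earlier factor. One should also check that the collapse of the accumulated left factor is consistent with associativity. This is automatic because the induction always multiplies the accumulated product on the right.
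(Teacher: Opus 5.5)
Your proposal is correct and matches the paper's proof: induct on $k$, right-multiply the accumulated product by the last factor so that the cross term $\bigl(\sum_{s<k}a_s\bigr)b_k$ supplies exactly the new pairs $(s,k)$, then read off the criterion. That last step uses the fact that $M(A,B,C)$ is the identity iff $A=B=C=0$, together with the definition of $\Om(I)$ as the set of achievable cross-sums.
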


\begin{proof}
Induct on $k$. The case $k=1$ is immediate. Assume the product of the first $k-1$
factors (in order $\sigma$) is $M(A,B,C)$ with $A=\sum_{s<k}a_{\sigma(s)}$,
$B=\sum_{s<k}b_{\sigma(s)}$, $C=\sum_{s<k}c_{\sigma(s)}+\sum_{s<t<k}a_{\sigma(s)}b_{\sigma(t)}$.
Multiplying by $g_{\sigma(k)}=M(a_{\sigma(k)},b_{\sigma(k)},c_{\sigma(k)})$ gives, by the
group law, $M\big(A+a_{\sigma(k)},\,B+b_{\sigma(k)},\,C+c_{\sigma(k)}+A\,b_{\sigma(k)}\big)$,
and $A\,b_{\sigma(k)}=\sum_{s<k}a_{\sigma(s)}b_{\sigma(k)}$ supplies exactly the pairs
$(s,k)$. This proves the formula. The product equals $M(0,0,0)$ iff the quotient sum
vanishes and $\sum_i c_i+\sum_{s<t}a_{\sigma(s)}b_{\sigma(t)}=0$, i.e.\ iff
$-\sum_i c_i$ is the cross-sum of some ordering, which is the stated condition.
\end{proof}

Two consequences of the cross-sum structure are used repeatedly. Swapping two adjacent
factors changes the cross-sum by $\pm\det(q_i,q_j)$, where $\det\big((a,b),(a',b')\big)=ab'-a'b$;
hence $\Om(I)$ is contained in a single coset of the subgroup of $\F_p$ generated by the
pairwise determinants of the $q_i$, the containment being in general strict. In
particular, if all $q_i$ lie on one line through the origin then every determinant
vanishes and $|\Om(I)|=1$. We collect the precise facts in \S\ref{sec:lemmas}.

\section{The lower bound}
\label{sec:lower}

\begin{proposition}\label{prop:lower}
For $p=5$ the sequence $x^4y^4v^4=\big(x,x,x,x,\,y,y,y,y,\,v,v,v,v\big)$ of length $12$
is product-one-free. Hence $\dav(\Heis_{125})\ge12$.
\end{proposition}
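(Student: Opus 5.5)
We apply Lemma~\ref{lem:crit} to an arbitrary nonempty subsequence $I=x^{\alpha}y^{\beta}v^{\gamma}$ with $0\le\alpha,\beta,\gamma\le4$ and $\alpha+\beta+\gamma\ge1$. The quotients are $q(x)=(1,0)$, $q(y)=(0,1)$ and $q(v)=(0,0)$, so the quotient sum of $I$ is $(\alpha,\beta)\bmod 5$. Since $0\le\alpha,\beta\le4$, this sum vanishes only when $\alpha=\beta=0$. It therefore suffices to handle subsequences of the form $I=v^{\gamma}$ with $1\le\gamma\le4$; every other subsequence already fails the first condition of the criterion and has no product-one ordering.

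For $I=v^{\gamma}$, all quotients equal $(0,0)$, so each term $a_{\sigma(s)}b_{\sigma(t)}$ in the cross-sum vanishes and $\Om(I)=\{0\}$. The second condition of Lemma~\ref{lem:crit} asks that $-\sum c_i=-\gamma\in\Om(I)$, that is, $\gamma\equiv0\pmod 5$. This is false for $1\le\gamma\le4$. Equivalently, $v$ is central of order $5$, so $v^\gamma\ne1$ for these $\gamma$, and the ordering is irrelevant. Thus no nonempty subsequence of $x^4y^4v^4$ has a product-one ordering. The sequence has length $12$, so $\dav(\Heis_{125})\ge12$.

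There is no real obstacle here. The only point needing care is that the exponents are capped at $p-1=4$: this cap is exactly what blocks the $x$- and $y$-coordinates of the quotient sum from reaching $0\bmod5$, and it blocks the central part $v^\gamma$ from reaching the identity. As a sanity check, the same argument gives the witness $x^{p-1}y^{p-1}v^{p-1}$ of length $3p-3$ for every odd prime $p$. For $p=5$ the claim can also be confirmed by brute-force multiplication over all $5^3-1=124$ subsequence types and their orderings.
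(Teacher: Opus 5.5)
Your proof is correct and matches the paper's argument: Lemma~\ref{lem:crit} forces $\alpha\equiv\beta\equiv0\pmod 5$, hence $\alpha=\beta=0$, and then $v^\gamma\ne1$ for $1\le\gamma\le4$. Your explicit computation $\Om(v^\gamma)=\{0\}$ and your remark that the witness generalizes to $x^{p-1}y^{p-1}v^{p-1}$ are harmless additions.
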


\begin{proof}
Here $x=M(1,0,0)$, $y=M(0,1,0)$, $v=M(0,0,1)$. A nonempty sub-multiset uses $\alpha$
copies of $x$, $\beta$ of $y$, $\gamma$ of $v$ with $0\le\alpha,\beta,\gamma\le4$. By
Lemma~\ref{lem:crit} a product-one ordering requires $\alpha\equiv\beta\equiv0\pmod5$,
forcing $\alpha=\beta=0$; then the sub-multiset is $v^\gamma$ with $1\le\gamma\le4$,
whose only product is $v^\gamma\ne1$. So no nonempty subsequence is product-one. This
is the conjectured extremal sequence; we verified computationally that it is product-one-free
and that no single element of $\Heis_{125}$ extends it while preserving that property.
\end{proof}

\section{The Omega-facts and the abelian reductions}
\label{sec:lemmas}

Throughout this section $p=5$ unless stated otherwise. For a multiset $A$ over $\F_p$
write $\Sig(A)=\{\sum_{i\in J}A_i:J\subseteq A\}$ for its set of subset sums (including
the empty sum $0$). A sequence $A$ is \emph{zero-sum-free} if no nonempty
subsequence of $A$ sums to $0$.

\begin{lemma}[Central subset sums]\label{lem:central}
Let $A=(\alpha_1,\dots,\alpha_r)$ be a zero-sum-free sequence over $\F_p^\times$. Then
$|\Sig(A)|\ge r+1$; consequently $r\le p-1$, and $r=p-1$ forces $\Sig(A)=\F_p$.
\end{lemma}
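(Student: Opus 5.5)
We argue by induction on $r$, adding one term at a time and tracking the partial subset-sum sets
\[
\Sig_j=\Sig\big((\alpha_1,\dots,\alpha_j)\big),\qquad 0\le j\le r,
\]
so that $\Sig_0=\{0\}$ and $|\Sig_0|=1$. The target is the strict growth $|\Sig_j|\ge|\Sig_{j-1}|+1$ at each step; summing gives $|\Sig(A)|\ge r+1$.

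\textbf{The step.} We have $\Sig_j=\Sig_{j-1}\cup(\Sig_{j-1}+\alpha_j)$. Suppose the set does not grow, i.e. $\Sig_{j-1}+\alpha_j\subseteq\Sig_{j-1}$. Then $\Sig_{j-1}$ is closed under adding $\alpha_j$. Since $\alpha_j\ne0$ in the prime field $\F_p$, it generates $\F_p$ additively, so closure forces $\Sig_{j-1}=\F_p$.

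\textbf{The contradiction.} This case is the one point that needs care. If $\Sig_{j-1}=\F_p$, then in particular $-\alpha_j$ is a subset sum of $(\alpha_1,\dots,\alpha_{j-1})$, say $-\alpha_j=\sum_{i\in J}\alpha_i$. Adjoining $\alpha_j$ to $J$ gives a nonempty zero-sum subsequence. (If $J=\emptyset$, this reads $\alpha_j=0$, which is excluded since $\alpha_j\in\F_p^\times$.) This contradicts zero-sum-freeness, so each step adds at least one new element.

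\textbf{Consequences.} The counting bound $r+1\le|\Sig(A)|\le p$ gives $r\le p-1$. When $r=p-1$, we get $|\Sig(A)|\ge p$, so $\Sig(A)=\F_p$.

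As an alternative, the same bound follows from Cauchy--Davenport, since
\[
\Sig(A)=\{0,\alpha_1\}+\cdots+\{0,\alpha_r\}.
\]
However, iterating Cauchy--Davenport needs the running sumset to stay a proper subset of $\F_p$, and that is exactly the zero-sum-free argument above. I would therefore present the elementary induction and mention Cauchy--Davenport only as a remark. I expect no real obstacle; the only subtlety is checking the case where $\Sig_{j-1}$ is already all of $\F_p$, which is handled in the contradiction step.
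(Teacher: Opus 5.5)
Your proof is correct and is essentially the paper's own argument. It uses the same term-by-term strict growth of the partial subset-sum sets, the same observation that a non-growing step makes the partial set invariant under translation by the generator $\alpha_j$ (hence equal to $\F_p$), and the same contradiction obtained by writing $-\alpha_j$ as a subset sum of the earlier terms; your explicit handling of $J=\emptyset$ and the Cauchy--Davenport aside are harmless additions.
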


\begin{proof}
Let $T_j$ be the set of subset sums of $(\alpha_1,\dots,\alpha_j)$, so $T_0=\{0\}$ and
$T_j=T_{j-1}\cup(T_{j-1}+\alpha_j)$. If $T_{j-1}+\alpha_j=T_{j-1}$ then
$T_{j-1}$ is invariant under translation by $\alpha_j\ne0$; as $p$ is prime,
$\langle\alpha_j\rangle=\F_p$, so the only translation-invariant sets are $\emptyset$
and $\F_p$, and $T_{j-1}\ni0$ is nonempty, forcing $T_{j-1}=\F_p$. But then
$-\alpha_j\in T_{j-1}$, i.e.\ some subset of $(\alpha_1,\dots,\alpha_{j-1})$ sums to
$-\alpha_j$, and adjoining $\alpha_j$ gives a nonempty zero-sum subset---contradicting
zero-sum-freeness. Hence each step strictly grows the set, $|T_r|\ge1+r$. Since
$|T_r|\le p$, $r\le p-1$; and $r=p-1$ gives $|T_r|\ge p$, so $\Sig(A)=T_r=\F_p$.
\end{proof}

\begin{lemma}[Omega-facts]\label{lem:omega}
Let $B$ be a nonempty zero-quotient-sum block, i.e.\ $\sum_{i\in B}q_i=(0,0)$.
\begin{enumerate}
\item[\textup{(a)}] If the $q_i$ are collinear (all on one line through the origin),
then $|\Om(B)|=1$.
\item[\textup{(b)}] If $B$ is a non-collinear zero-sum triple, then $|\Om(B)|=2$.
\item[\textup{(c)}] If $\spn\{q_i:i\in B\}=\F_p^2$ and $|B|\ge6$, then $\Om(B)=\F_p$
(call such $B$ \emph{wide}).
\end{enumerate}
\end{lemma}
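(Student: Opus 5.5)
For (a) and (b) I will work directly from the cross-sum formula of Lemma~\ref{lem:crit}. For (c) the plan is to generate many cross-sum values by commuting adjacent transpositions, use Cauchy--Davenport to turn that into a lower bound on $|\Om(B)|$, and reduce the leftover small or lopsided configurations to a finite check.

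\emph{Two basic moves.} Let $\sigma$ be an ordering and swap the adjacent factors in positions $s,s+1$. The only cross term that changes is $a_{\sigma(s)}b_{\sigma(s+1)}$, which becomes $a_{\sigma(s+1)}b_{\sigma(s)}$. Hence the cross-sum changes by $-\det(q_{\sigma(s)},q_{\sigma(s+1)})$. Next, use the hypothesis that the quotient sum is $(0,0)$, and move the first factor $g_i$ to the end. Its contribution changes from $a_i(B-b_i)$ to $(A-a_i)b_i$, where $A=B=0$; both equal $-a_ib_i$. So the cross-sum of a zero-quotient-sum block is \emph{invariant under cyclic rotation}.

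\emph{Part (a).} If the $q_i$ are collinear, every determinant vanishes. Adjacent transpositions generate $S_k$, so all orderings give the same cross-sum and $|\Om(B)|=1$.

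\emph{Part (b).} Let the triple be $q_1,q_2,q_3$ with $q_3=-q_1-q_2$. Then
\[
\det(q_1,q_2)=\det(q_2,q_3)=\det(q_3,q_1)=d,
\]
and $d\ne0$ by non-collinearity. By rotation invariance the six orderings fall into at most two cyclic classes, $(1,2,3)$ and $(1,3,2)$. These differ by one adjacent transposition, so their cross-sums differ by $\pm d\neq0$. Hence $|\Om(B)|=2$ exactly.

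\emph{Part (c): the main mechanism.} Suppose an ordering contains $m$ pairwise disjoint adjacent pairs $(u_j,w_j)$ with $d_j=\det(u_j,w_j)\ne0$. Swapping any subfamily of these pairs changes the cross-sum by exactly $-\sum_{j\in J}d_j$, since disjoint pairs do not interact. Therefore
\[
\Om(B)\supseteq c_0-\Sig(d_1,\dots,d_m).
\]
By Cauchy--Davenport (or the growth argument of Lemma~\ref{lem:central}, which needs no zero-sum-freeness for growth until the set is all of $\F_p$), $|\Sig(d_1,\dots,d_m)|\ge\min(p,m+1)$. With $p=5$, four such pairs suffice.

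To find the pairs, group the terms of $B$ by the line through the origin on which they lie. Two terms have nonzero determinant iff they lie on different lines. This is a complete multipartite graph, whose maximum matching has size $\min(\lfloor n/2\rfloor,\,n-M)$, where $n=|B|$ and $M$ is the largest line class. Any matching can be realised as disjoint adjacent pairs in a single ordering. So the case $n\ge8$, $n-M\ge4$ is done.

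\emph{Part (c): residual cases, the main obstacle.} Two situations remain: $n\in\{6,7\}$, where only $3$ pairs are available, and the lopsided case $n-M\le3$, where almost everything lies on one line $L$.

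\begin{itemize}
\item \emph{Lopsided case, first move.} Slide a single off-line term $w$ step by step across a run of terms of $L$. Successive positions differ by $\det(w,u)$. These determinants are nonzero multiples of a fixed $\det(w,u_0)$, because every $u$ on $L$ is a multiple of $u_0$.
\item \emph{Lopsided case, combining moves.} The partial sums of the determinants along the run give a set of values. I would combine this with the spanning hypothesis (at least one off-line term, and $\ge6$ terms in total) and with rotation invariance, to get at least $5$ values.
\item \emph{Repetitions.} The partial sums can repeat or cancel, since the multiples may sum to $0$ modulo $5$. I would handle this by also sliding a second off-line term, or by choosing which copies to place in the run.
\item \emph{Fallback.} If the hand argument becomes case-heavy, bound the configurations by their line-class profile and multiplicities modulo $5$. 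Then verify (c) by computer over all zero-quotient-sum spanning multisets of size $6$ and $7$, and over the lopsided profiles, exhaustively enumerating orderings up to rotation.
\end{itemize}

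This residual analysis is where I expect the real work. The matching-plus-Cauchy--Davenport argument handles the generic case cleanly.
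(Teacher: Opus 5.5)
Your proofs of (a) and (b) are correct, as is the generic case of (c). They are actual proofs where the paper only enumerates: the paper settles (b) and (c) by exhaustive search at $p=5$, while your rotation-invariance argument gives (b) for every odd $p$. Your disjoint-pairs-plus-Cauchy--Davenport mechanism settles every block with $n\ge8$ and $n-M\ge4$.

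\textbf{The gap.} The lopsided case $n-M\le3$ with $n\ge8$ is not proved. There $M$ is unbounded, so ``verify the lopsided profiles by computer'' is not a finite check unless you give a reduction, and you do not. Your sliding sketch stalls on exactly the cancellation among partial sums that you flag.

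\textbf{The missing idea.} Terms on $L$ have pairwise zero determinant, so you can reorder them at no cost. Fix an off-line term $w$ and any sub-multiset $J$ of the $L$-terms $u_i=\lambda_iu_0$. Place exactly $J$ immediately before $w$ and the rest immediately after. Each $u_i$ that crosses $w$ shifts the cross-sum by $\lambda_i\det(u_0,w)$, independently of the others. Hence, with $c_0$ the value at $J=\emptyset$,
\[
\Om(B)\supseteq c_0+\det(u_0,w)\,\Sig(\lambda_1,\dots,\lambda_M).
\]
These are subset sums rather than partial sums, so Cauchy--Davenport gives $|\Om(B)|\ge\min(5,M+1)$. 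The bound is sharp: the paper's block $\{(0,1)^{\times3},(1,0),(4,2)\}$ has $M=3$ and $|\Om|=4$.

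\textbf{What this leaves.} The argument disposes of every spanning block with some line carrying at least $4$ terms. Your matching bound then handles $M\le3$ once $n\ge8$. What remains is $n\in\{6,7\}$ with every line class of size at most $3$. There both mechanisms guarantee only $4$ values, so this residue needs a finite check or a further idea.

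The paper's own support for (c) is an enumeration of the $18{,}900$ spanning zero-sum blocks of size exactly $6$, and it says nothing about $|B|\ge7$. So once you repair the lopsided case as above and actually carry out the size-$7$ check, your plan would cover more of (c) than the paper's cited verification does.
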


The mechanism is the adjacent-swap rule of \S\ref{sec:crit}: $\Om(B)$ is contained in a
single coset of the subgroup of $\F_p$ generated by the determinants $\det(q_i,q_j)$.
In case (a) that subgroup is $\{0\}$, forcing $|\Om(B)|=1$; in general the containment
can be strict (which is why (b) gives $|\Om|=2$ rather than $p$), so we establish
(a)--(c) directly rather than reading them off the coset. Parts (a) and (b) we verified
by exhaustive enumeration ($93{,}600$ collinear cases of sizes $2$--$5$; $480$
non-collinear zero-sum triples, with $q_1,q_2$ free and $q_3$ determined); part (c) over
all $18{,}900$ zero-sum spanning size-$6$ blocks. We emphasise one correction to a
tempting shortcut: a non-collinear zero-sum block of size $5$ need \emph{not} be
wide---for example $\{(0,1)^{\times3},(1,0),(4,2)\}$ has $\Om=\{0,2,3,4\}$, $|\Om|=4<5$.
This is precisely why the upper bound is closed by the finite search of
\S\ref{sec:upper} rather than by a one-line Chevalley--Warning argument; see
Remark~\ref{rem:cw}.

\begin{lemma}[Line cap]\label{lem:linecap}
Let $S$ be product-one-free over $\Heis_{p^3}$, let $L\subseteq\F_p^2$ be a line through
the origin, and let $m_L$ be the number of non-central terms $g$ of $S$ with $q(g)\in L$.
Let $z$ be the number of central terms of $S$. Then $m_L+z\le2p-2$.
\end{lemma}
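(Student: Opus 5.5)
The plan is to recognise the terms counted by $m_L+z$ as a product-one-free sequence inside an abelian subgroup isomorphic to $C_p^2$, and then apply Olson's theorem $\Dav(C_p^2)=2p-1$.

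\textbf{The subgroup.} Let $K=q^{-1}(L)=\{M(a,b,c):(a,b)\in L\}$, which has order $p^2$. It is a subgroup because $q$ is a homomorphism onto $\F_p^2$ and $L$ is a subgroup of $\F_p^2$. It contains the centre $\langle v\rangle$, which is the set of elements with quotient $(0,0)$.

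\textbf{$K$ is abelian.} For $g,h\in K$, compare $gh$ with $hg$ using the group law (equivalently, Lemma~\ref{lem:crit} with $k=2$). Their central coordinates differ by $\det(q(g),q(h))$. This determinant is $0$ because $q(g)$ and $q(h)$ lie on the same line $L$ through the origin, so $gh=hg$.

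\textbf{$K\cong C_p^2$.} Since $p$ is odd, $\Heis_{p^3}$ has exponent $p$. Hence $K$ is an abelian group of order $p^2$ and exponent $p$, so $K\cong C_p^2$.

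\textbf{The subsequence.} Let $T\subseteq S$ consist of the $m_L$ non-central terms with quotient in $L$ together with the $z$ central terms. All terms of $T$ lie in $K$, and none is the identity because $S$ excludes it. Since $S$ is product-one-free over $\Heis_{p^3}$, no nonempty subsequence of $T$ has a product-one ordering in $\Heis_{p^3}$, and hence none has one in $K$. Because $K$ is abelian, this means $T$ is a zero-sum-free sequence over $C_p^2$.

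\textbf{Conclusion.} By Olson's theorem, $|T|\le\Dav(C_p^2)-1=2p-2$. Since $|T|=m_L+z$, this gives the claimed bound $m_L+z\le 2p-2$.

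The only step needing care is the commutativity of $K$. It is exactly the collinear case of the adjacent-swap rule from \S\ref{sec:crit}, equivalently Lemma~\ref{lem:omega}(a). I expect no real obstacle beyond checking that the central terms are correctly included in $K$, which holds because their quotient is $(0,0)\in L$.
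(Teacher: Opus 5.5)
Your proposal is correct and matches the paper's proof essentially step for step. You take $q^{-1}(L)$ (the paper's $A_L$), show it is abelian because collinear quotients give $\det(q,q')=0$, identify it with $C_p^2$ via order $p^2$ and exponent $p$, and apply Olson's $\Dav(C_p^2)=2p-1$ to the zero-sum-free sequence of the $z$ central and $m_L$ on-line terms; your explicit notes that $q$ is a homomorphism (so $K$ is a subgroup) and that $p$ odd is needed for exponent $p$ are minor clarifications the paper leaves implicit.
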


\begin{proof}
Let $A_L=\{M(a,b,c):(a,b)\in L\}$. Two elements with quotients $q,q'\in L$ commute,
since $[M(q,\cdot),M(q',\cdot)]=v^{\det(q,q')}=1$ (collinear), so $A_L$ is abelian; it
has order $p^2$ and exponent $p$, hence $A_L\cong C_p\times C_p$. The terms of $S$
lying in $A_L$ are exactly the $z$ central ones and the $m_L$ on-line non-central ones.
A nonempty zero-sum subsequence of these (in the abelian group $A_L$, where order is
irrelevant) would be a product-one subsequence of $S$; as $S$ is product-one-free, the
$A_L$-terms form a zero-sum-free sequence over $C_p^2$. By Olson~\cite{OlsonI,OlsonII},
$\Dav(C_p^2)=2p-1$, so a zero-sum-free sequence has length at most $2p-2$. Thus
$m_L+z\le2p-2$.
\end{proof}

For $p=5$ this reads $m_L\le8-z$.

\begin{lemma}[Central forcing]\label{lem:forcing}
Let $S$ have central terms $Z$ with subset-sum set $\Sig(Z)\subseteq\F_p$, and let
$B_1,\dots,B_t$ ($t\ge1$) be pairwise-disjoint nonempty zero-quotient-sum blocks of
non-central terms of $S$. If
\[
|\Sig(Z)|+\sum_{j=1}^{t}\big(|\Om(B_j)|-1\big)\ \ge\ p,
\]
then $S$ has a product-one subsequence.
\end{lemma}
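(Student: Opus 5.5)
The plan is a Cauchy--Davenport sumset argument in $\F_p$. Let $B=Z\cup B_1\cup\dots\cup B_t$ be the union of the central terms and the given blocks. Since the $B_j$ are pairwise disjoint and consist of non-central terms, the blocks are also disjoint from $Z$. For each $J\subseteq Z$, consider the candidate subsequence $I=J\cup B_1\cup\dots\cup B_t$.

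Its quotient sum is $\sum_j\sum_{i\in B_j}q_i+\sum_{i\in J}(0,0)=(0,0)$, so by Lemma~\ref{lem:crit} it suffices to show that $-\sum_{i\in I}c_i\in\Om(I)$ for some choice of $J$. Here $I$ is nonempty because $t\ge1$ and each $B_j$ is nonempty.

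The key structural step is the inclusion
\[
\Om(I)\ \supseteq\ \Om(B_1)+\dots+\Om(B_t).
\]
To see this, order $I$ by concatenation: first the central terms of $J$ in any order, then $B_1$ in an ordering $\sigma_1$, then $B_2$ in an ordering $\sigma_2$, and so on. Central terms have $q=(0,0)$, so they contribute nothing to the cross-sum. The cross-sum of the concatenation equals the sum of the internal cross-sums of the $B_j$, plus the cross terms $\sum_{j<k}\big(\sum_{s\in B_j}a_s\big)\big(\sum_{t\in B_k}b_t\big)$. Each such cross term vanishes because every block has zero quotient sum. Letting the $\sigma_j$ range independently realizes every element of $\Om(B_1)+\dots+\Om(B_t)$.

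It then suffices to find $J\subseteq Z$ and $\omega\in\sum_j\Om(B_j)$ with
\[
\sum_{i\in J}c_i+\omega=-C,\qquad C:=\sum_j\sum_{i\in B_j}c_i .
\]
Equivalently, we need $-C\in\Sig(Z)+\Om(B_1)+\dots+\Om(B_t)$. We apply Cauchy--Davenport in $\F_p$ ($p$ prime) iteratively, using that all sets involved are nonempty ($\Sig(Z)\ni0$, and each $\Om(B_j)$ is nonempty). This gives
\[
\Big|\Sig(Z)+\sum_j\Om(B_j)\Big|\ \ge\ \min\Big(p,\ |\Sig(Z)|+\sum_j\big(|\Om(B_j)|-1\big)\Big)=p
\]
by hypothesis. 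So the sumset is all of $\F_p$ and contains $-C$, which yields the product-one subsequence $I$.

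The only point needing care is the concatenation identity: the vanishing of the inter-block cross terms, and that the central terms contribute no cross terms. Both are immediate from zero quotient sums. A degenerate case is worth checking: if $J=\emptyset$, then $I$ is still nonempty because $t\ge1$, so the subsequence obtained is genuinely nonempty.
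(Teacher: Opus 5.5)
Your proof is correct and follows the paper's argument essentially verbatim. You take $I=J\cup B_1\cup\dots\cup B_t$ and obtain $\Om(I)\supseteq\Om(B_1)+\dots+\Om(B_t)$ by concatenating block orderings: the inter-block cross terms vanish because each block has zero quotient sum, and the central terms contribute nothing. You then close with iterated Cauchy--Davenport on $\Sig(Z)+\sum_j\Om(B_j)$, exactly as the paper does; the only cosmetic difference is that you place $J$ first in the ordering rather than appending it, which is immaterial since central terms have quotient $(0,0)$.
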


\begin{proof}
Consider subsequences of the form $I=J\cup B_1\cup\dots\cup B_t$ with $J\subseteq Z$.
Each $B_j$ has zero quotient-sum and $Z$ is central, so $\sum_{i\in I}q_i=(0,0)$. Order
$I$ by concatenating an ordering of each block (in any fixed block order) and appending
$J$; the cross-contribution between two distinct blocks $B_i,B_j$ equals
$(\sum_{B_i}a)(\sum_{B_j}b)=0$ because every block has vanishing $b$-sum, and central
terms contribute $q=(0,0)$. Hence the achievable cross-sums of $I$ contain the sumset
$\Om(B_1)+\dots+\Om(B_t)$. By Lemma~\ref{lem:crit}, $I$ is product-one for a suitable
ordering and choice of $J$ iff
\[
-\Big(\sum_{B_j}c\Big)\ \in\ \Sig(Z)+\Om(B_1)+\dots+\Om(B_t),
\]
where the $\Sig(Z)$ summand records the free choice of $J$ (its central sum ranges over
$\Sig(Z)$). By the Cauchy--Davenport theorem~\cite{Nathanson1996},
\[
\big|\Sig(Z)+\textstyle\sum_j\Om(B_j)\big|
\ \ge\ \min\!\Big(p,\ |\Sig(Z)|+\sum_j\big(|\Om(B_j)|-1\big)\Big)=p,
\]
so the sumset is all of $\F_p$ and the required value is attained. The subsequence is
nonempty because each block $B_j$ is nonempty by hypothesis ($t\ge1$).
\end{proof}

\section{The spread bound and the upper bound}
\label{sec:upper}

For a multiset $Q$ over $\F_p^2\setminus\{0\}$ define the \emph{spread}
\[
\sigma(Q)=\max\Big\{\textstyle\sum_{j=1}^{t}\big(|\Om(B_j)|-1\big)\Big\},
\]
the maximum over all families of pairwise-disjoint nonempty zero-sum sub-multisets
$B_1,\dots,B_t$ of $Q$ (with $\Om$ computed from the $q$-values). The line caps of
Lemma~\ref{lem:linecap} bound, for each line $L$, the number $m_L(Q)$ of elements of
$Q$ on $L$.

\begin{proposition}[Spread bound]\label{prop:spread}
Let $p=5$. For each $z\in\{0,1,2,3\}$, every multiset $Q$ of length $13-z$ over
$\F_5^2\setminus\{0\}$ with $m_L(Q)\le8-z$ for all lines $L$ satisfies
\[
\sigma(Q)\ \ge\ 4-z .
\]
\end{proposition}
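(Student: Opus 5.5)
The approach is a hybrid: first find the configurations where a short pencil argument using Lemma~\ref{lem:omega} already gives the spread, then settle the remaining finite space by an exhaustive search reduced modulo symmetry. The basic tool is Olson's theorem. Any sub-multiset of $Q$ of length at least $\Dav(C_5^2)=9$ contains a nonempty zero-sum block. Since $|Q|=13-z\ge10$, we can peel off zero-sum blocks greedily: each extraction removes at most $9$ terms and leaves a shorter multiset, and we stop once fewer than $9$ terms remain. A block contributes to $\sigma(Q)$ only if it is non-collinear, by Lemma~\ref{lem:omega}(a). So the goal is to extract disjoint non-collinear zero-sum blocks whose gains $|\Om(B_j)|-1$ total at least $4-z$. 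A wide block (Lemma~\ref{lem:omega}(c)) gains $4$ and finishes every case at once. A non-collinear triple gains exactly $1$ by part~(b), so four disjoint ones also suffice when $z=0$.

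\textbf{Step 1: symmetry reduction.} The spread is invariant under $\mathrm{SL}_2(\F_5)$, because determinants are preserved. It is also invariant under $\mathrm{GL}_2(\F_5)$: there $\Om$ is rescaled by the determinant of the map, which leaves $|\Om|$ unchanged. The line-cap hypothesis is likewise $\mathrm{GL}_2$-invariant. I would therefore enumerate multisets of size $13-z$ over the $24$ nonzero vectors, subject to the caps, up to $\mathrm{GL}_2(\F_5)$ (a group of order $480$). I would canonise by the multiplicity vector on the $6$ lines, distributed over the $4$ nonzero points of each line.

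\textbf{Step 2: analytic pruning.} I would try to dispose of the case in which every line has at most $8-z$ points but the multiset genuinely spans, using the following claim. If a zero-sum block $B$ of size at least $6$ spans $\F_5^2$, we are done. Otherwise every spanning zero-sum block found has size at most $5$. Removing one such block leaves at least $8-z$ terms. When $z\le1$ this leaves at least $8\ge 2p-2$ terms, and Olson's bound lets us extract another block, so we iterate. Because the cap $m_L\le 8-z$ prevents the remainder from collapsing onto a single line, it should be possible to choose each extracted block non-collinear. This gives roughly $\lceil(13-z)/5\rceil$ non-collinear blocks, each gaining at least $1$. That is enough for $z=3$ (needing $1$) and plausibly for $z=2$. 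The case $z=0$ needs gain $4$, which is where the $|\Om|=4$ example of size $5$ matters: a size-$5$ non-collinear block can gain as much as $3$ but may gain less.

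\textbf{Step 3: the main obstacle, $z=0$ and $z=1$.} Here the greedy count of blocks falls short, because only two or three blocks of size $5$ fit inside $13$ terms. We need either a wide block of size at least $6$ or blocks with large individual $|\Om|$. I do not see a clean argument forcing this. I would first try to show that a multiset of $12$ or $13$ points, with no line holding more than $8$, always contains a spanning zero-sum block of size $6$ to $9$. The natural tool is Chevalley--Warning or an Olson-type bound applied after deleting a subfamily of the heaviest line. This is exactly the shortcut that Remark~\ref{rem:cw} warns can fail to be wide, so I expect residual configurations to survive. For those, I would fall back on a depth-first search over canonical multisets. For each multiset, the search computes $\sigma$ by enumerating disjoint zero-sum block families, pruning as soon as the running gain reaches $4-z$ and using precomputed $|\Om|$ tables for all zero-sum blocks of size at most $9$. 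The search succeeds if no multiset returns $\sigma<4-z$, and that verdict would be cross-checked by a second, independent pruning order.
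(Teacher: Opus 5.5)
Your overall strategy, an exhaustive finite search cross-checked by a second implementation, is the paper's. Your $\mathrm{GL}_2(\F_5)$ reduction is a sound extra that the paper does not use. For a zero-sum block, $\sum_{s<t}a_sb_t=\tfrac12\big(\sum_{s<t}\det(q_s,q_t)-\sum_s a_sb_s\big)$, and the last sum does not depend on the ordering. So $|\Omega|$ is $\mathrm{GL}_2$-invariant, and so are the caps.

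The genuine gap is in how you compute $\sigma$. You only ever look at blocks of size at most $9$: Olson-extracted blocks, $|\Omega|$-tables up to size $9$, and wide blocks of size $6$ to $9$. That gives only a lower bound for $\sigma$, and it is too weak, because gain is not additive under splitting: two collinear blocks of gain $0$ can merge into a wide block of gain $4$. Concretely, take $e=(1,0)$, $f=(0,1)$ and $Q=e^{8-z}f^{5}$. This multiset satisfies every cap in every stratum. Its only zero-sum sub-multisets are $e^5$, $f^5$ and $e^5f^5$, and only the last, of size $10$, is non-collinear. Its cross-sums count $e$-before-$f$ pairs, so they take every value $0,\dots,25$ and give $\Omega=\F_5$. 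Any $\sigma$-computation restricted to blocks of size $\le 9$ returns $0$ on this $Q$. A search built on your tables would therefore report a spurious bad leaf in every stratum and could never conclude.

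The paper's packing recursion lets a block be any zero-sum sub-multiset and computes $|\Omega(B)|$ directly with the reachable-cross-sum bitmask. So it needs neither Olson's bound nor Lemma~\ref{lem:omega}(c). Note that the paper machine-checks (c) only at size $6$, while your wide-block shortcut would invoke it up to size $13$.

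Step~2 also cannot be used to discharge $z=3$ and $z=2$ without search, for three reasons:
\begin{itemize}
\item Olson guarantees a zero-sum block only among $2p-1=9$ terms, not $2p-2=8$; for example, $e^4f^4$ is zero-sum-free.
\item At $z=1$, removing a $5$-block leaves $7$ terms, not $8$.
\item The caps do not let you choose the extracted blocks to be non-collinear. A remainder such as $e^5f^4$ has $e^5$ as its only zero-sum block, so the count of $\lceil(13-z)/5\rceil$ gain-$\ge1$ blocks is unsupported.
\end{itemize}
If you allow blocks of all sizes, compute $|\Omega|$ exactly, and run the search on all four strata, your plan becomes the paper's proof plus a symmetry speed-up. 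The paper notes that running every stratum, not just the cheapest, was load-bearing.
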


\begin{proof}
This is a finite statement, verified by exhaustive search. The search space is the set
of multisets of the given length over the $24$ nonzero vectors of $\F_5^2$ subject to
the per-line caps; we enumerate it by a canonical depth-first recursion over the $24$
vectors (so each multiset is generated once), pruning a branch as soon as its partial
multiset already attains $\sigma\ge4-z$, which is valid because $\sigma$ is monotone
under adjoining elements. A completed leaf with $\sigma<4-z$ would be a counterexample
(a ``bad leaf''); the search finds none. The spread of a partial multiset is computed
exactly: $|\Om(B)|$ via the reachable-cross-sum recursion (a $p$-bit mask), and
$\sigma\ge\text{req}$ via the canonical ``discard the lowest-index element, or place it
in a zero-sum block'' packing recursion, with early exit at the requirement.

\begin{center}
\begin{tabular}{ccccrr}
\toprule
$z$ & $n=13-z$ & cap $=8-z$ & req $=4-z$ & DFS nodes & bad leaves\\
\midrule
$3$ & $10$ & $5$ & $1$ & $3{,}429{,}337$ & $0$\\
$2$ & $11$ & $6$ & $2$ & $6{,}389{,}224$ & $0$\\
$1$ & $12$ & $7$ & $3$ & $11{,}048{,}032$ & $0$\\
$0$ & $13$ & $8$ & $4$ & $17{,}943{,}778$ & $0$\\
\bottomrule
\end{tabular}
\end{center}

\noindent Zero bad leaves in every row establishes the bound. The computation is
memory-flat (a recursion path of $24$ counts plus a transient block mask); its verdict
was independently reproduced by a second search with a different pruning strategy (and
hence different node counts), see \S\ref{sec:verif}.
\end{proof}

The proposition is the exhaustive resolution of an explicit, bounded constraint
satisfaction problem.

\begin{remark}[The residual problem and why the search is needed]\label{rem:cw}
A counterexample to Theorem~\ref{thm:main} would be a product-one-free sequence of
length $13$. Its profile consists of: a number $z\in\{0,1,2,3\}$ (the case $z=4$ is
disposed of directly below); a zero-sum-free central multiset over $\F_5^\times$ of size
$z$; a quotient multiset $Q$ over $\F_5^2\setminus\{0\}$ of length $13-z$ with line caps
$m_L\le8-z$; and arbitrary central labels $c_i\in\F_5$ on the non-central terms. If
$\sigma(Q)\ge4-z$ then, with $|\Sig(Z)|\ge z+1$, Lemma~\ref{lem:forcing} forces a
product-one subsequence regardless of the central labels; so a counterexample requires
$\sigma(Q)\le3-z$. Proposition~\ref{prop:spread} rules this out uniformly in $Q$, so the
profile set is empty and no enumeration over the central labels is needed. One might hope
to replace the search by a Chevalley--Warning argument: every $13$-term multiset over
$\F_5^2$ has a nonempty zero-sum subset of size $5$ or $10$ (degree count $4+4+4=12<13$
variables; verified). But the forced block may be collinear, or non-collinear yet not
wide (Lemma~\ref{lem:omega} and its counterexample), so Chevalley--Warning alone does not
deliver the spread; the finite search of Proposition~\ref{prop:spread} is what closes the
gap.
\end{remark}

\begin{theorem}[Upper bound]\label{thm:upper}
$\dav(\Heis_{125})\le12$: every sequence of length $13$ over $\Heis_{125}$ has a
nonempty product-one subsequence.
\end{theorem}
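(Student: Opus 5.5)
We argue by contradiction: suppose $S$ is a product-one-free sequence of length $13$ over $\Heis_{125}$. We split $S$ into its central terms $Z$ (those with $q(g)=(0,0)$), of which there are $z$, and its non-central terms, whose quotients form a multiset $Q$ over $\F_5^2\setminus\{0\}$ of length $13-z$. The central terms lie in $\langle v\rangle\cong C_5$, and their $c$-coordinates form a sequence over $\F_5^\times$. Since $S$ is product-one-free, this sequence is zero-sum-free. Lemma~\ref{lem:central} then gives $z\le4$ and $|\Sig(Z)|\ge z+1$.

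\emph{Case $z=4$.} Lemma~\ref{lem:central} gives $\Sig(Z)=\F_5$. By Lemma~\ref{lem:linecap}, every line $L$ satisfies $m_L\le 2p-2-z=4$. The $9$ non-central quotients form a sequence of length $9\ge\Dav(C_5^2)=9$ over $\F_5^2$, so by Olson they contain a nonempty zero-sum block $B$. Take $t=1$ and this $B$ in Lemma~\ref{lem:forcing}. The hypothesis reads $|\Sig(Z)|+|\Om(B)|-1\ge5+0=5=p$, so the lemma produces a product-one subsequence, a contradiction. The key point is that $|\Sig(Z)|=p$ already suffices for any single zero-quotient-sum block. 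This is presumably the ``directly below'' disposal promised in Remark~\ref{rem:cw}.

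\emph{Case $z\in\{0,1,2,3\}$.} By Lemma~\ref{lem:linecap}, $m_L(Q)\le8-z$ for every line $L$, and $|Q|=13-z$. Hence Proposition~\ref{prop:spread} applies and gives $\sigma(Q)\ge4-z$. Choose disjoint nonempty zero-sum blocks $B_1,\dots,B_t$ of non-central terms realizing this spread. We have $t\ge1$, since $4-z\ge1$ and the empty family has spread $0$. Then
\[
|\Sig(Z)|+\sum_j\big(|\Om(B_j)|-1\big)\ \ge\ (z+1)+(4-z)=5=p .
\]
Lemma~\ref{lem:forcing} then yields a product-one subsequence of $S$, again a contradiction. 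Note that $\Om(B_j)$ depends only on the quotients, so the central labels $c_i$ of the non-central terms play no role. This is why the spread bound on $Q$ alone suffices.

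The only delicate points are bookkeeping. We must check that $Z$ really is zero-sum-free as a sequence over $\F_5^\times$: a zero-sum subset of central elements is a product-one subsequence, since central elements commute. We must also confirm that the hypotheses of Proposition~\ref{prop:spread} (length and caps) match exactly what Lemma~\ref{lem:linecap} provides. The genuine mathematical weight lies entirely in Proposition~\ref{prop:spread}, which we take as established; the assembly above is routine.
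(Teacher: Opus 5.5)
Your proof is correct and follows the paper's argument essentially verbatim. It uses the same split: for $z=4$, Olson plus $\Sig(Z)=\F_5$; for $z\le3$, line caps feeding Proposition~\ref{prop:spread} and then Lemma~\ref{lem:forcing}. The differences are cosmetic: you route the $z=4$ case through Lemma~\ref{lem:forcing} with $t=1$ instead of building $J\cup B$ directly via Lemma~\ref{lem:crit}, and you make explicit the check $t\ge1$ that the paper leaves implicit (the line cap you cite at $z=4$ is harmless but unused).
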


\begin{proof}
Suppose $S$ has length $13$ and is product-one-free. Let $Z$ be its central terms and
$z=|Z|$; each is $v^{c_i}$ with $c_i\in\F_5^\times$, and $Z$ is zero-sum-free (a
zero-sum central subset would be a product-one subsequence). By Lemma~\ref{lem:central},
$z\le4$ and $|\Sig(Z)|\ge z+1$. The $13-z$ non-central terms have quotient multiset $Q$
over $\F_5^2\setminus\{0\}$, with $m_L(Q)\le8-z$ for every line $L$ by
Lemma~\ref{lem:linecap}.

\emph{Case $z=4$.} Then $|\Sig(Z)|=5$, i.e.\ $\Sig(Z)=\F_5$. The $9$ non-central terms
have quotients in $\F_5^2$, and $9=\Dav(C_5^2)$, so by Olson~\cite{OlsonII} they contain
a nonempty zero-quotient-sum block $B$. Taking $I=J\cup B$ with $J\subseteq Z$ chosen so
that $\sum_J c=-\sum_B c-\omega$ for some $\omega\in\Om(B)$ (possible since
$\Sig(Z)=\F_5$) yields a product-one subsequence by Lemma~\ref{lem:crit}---contradiction.

\emph{Cases $z\in\{0,1,2,3\}$.} By Proposition~\ref{prop:spread}, $\sigma(Q)\ge4-z$, so
there are disjoint zero-sum blocks with $\sum_j(|\Om(B_j)|-1)\ge4-z$. Combined with
$|\Sig(Z)|\ge z+1$,
\[
|\Sig(Z)|+\sum_j\big(|\Om(B_j)|-1\big)\ \ge\ (z+1)+(4-z)=5=p,
\]
so Lemma~\ref{lem:forcing} produces a product-one subsequence---contradiction.

Every case is contradictory, so no length-$13$ product-one-free sequence exists.
\end{proof}

\noindent Theorem~\ref{thm:main} follows from Proposition~\ref{prop:lower} and
Theorem~\ref{thm:upper}.

\section{Reconciliation with the ordered Davenport constant}
\label{sec:reconcile}

Godara, Joshi and Mazumdar~\cite{GJM2024} compute the \emph{ordered} Davenport constant
(equal, by their proof of Dimitrov's conjecture for this family, to the Loewy length of
$\F_p[\Heis_{p^3}]$) of the same group, $\Dord(\Heis_{p^3})=4p-3$, so
$\Dord(\Heis_{125})=17$. This does not conflict with Theorem~\ref{thm:main}. Here
$\Dord(G)$ is the maximal length of a sequence with no product-one subsequence read in
its given order; it constrains a single fixed order, whereas $\dav$ quantifies over all
orderings. A sequence that is product-one-free in the strong (all-orderings) sense is in
particular free in its given order, so $\dav(G)+1\le\Dord(G)$ for every finite group, an
inequality stated in \cite{GJM2024}. Thus $\dav(\Heis_{125})\le16$, and our value $12$ is
consistent with (and strictly smaller than) this bound. The inequality is genuinely loose
here, and all three Davenport-type constants of this group are distinct: at $p=3$,
$\dav(\Heis_{27})=6$, the large constant $\Dav(\Heis_{27})=8$, and
$\Dord(\Heis_{27})=9$, so $\dav$ is not recoverable from the ordered constant. Our
upper-bound argument uses neither the ordered constant nor the Loewy structure; it is
logically independent of \cite{GJM2024}.

\section{The case \texorpdfstring{$p=7$}{p=7}: where the argument stops}
\label{sec:p7}

It is natural to ask whether the method settles $\dav(\Heis_{343})=18$. It does not, and
the failure is instructive---the result is genuinely $p$-specific.

The uniform ingredients survive for every odd prime $p$: the product-one criterion
(Lemma~\ref{lem:crit}); the lower bound $x^{p-1}y^{p-1}v^{p-1}$, giving
$\dav(\Heis_{p^3})\ge3p-3$; the central cap $z\le p-2$ and the line cap
$m_L+z\le2p-2$ (Lemmas~\ref{lem:central},~\ref{lem:linecap}); the central-forcing
threshold $p$ (Lemma~\ref{lem:forcing}); and the Omega-facts. What does not survive is
the means of forcing enough spread. For $p=5$ this is the finite search of
Proposition~\ref{prop:spread}. For $p=7$ the analogous residual problem is far out of
computational reach---its $z=0$ stratum alone comprises on the order of $1.7\times10^{16}$
quotient multisets before central labels---and the Chevalley--Warning shortcut provably
fails to replace it.

Concretely, over $\F_7^2$ let $e=(1,0)$, $f=(0,1)$, $g=(6,6)$ and consider
\[
Q_0=e\,f^{6}\,g^{12}\qquad(\text{length }19,\ z=0),
\]
which satisfies all line caps ($n_L\le12=2p-2$). Chevalley--Warning forces a nonempty
zero-sum sub-multiset of size $p=7$ or $2p=14$; for $Q_0$ the \emph{only} such
sub-multiset is $g^{7}$, which is collinear and has $|\Om|=1$ (verified). So
Chevalley--Warning alone yields no non-collinear, let alone wide, block. ($Q_0$ is not a
counterexample: it contains the wide block $e\,f\,g^{8}$ of size $10$ with $\Om=\F_7$,
and so $\sigma(Q_0)\ge6$---but size $10$ is not a Chevalley--Warning-forced size.) The
spread must therefore be extracted from blocks of sizes the polynomial method does not
deliver, and we have no finite certificate to substitute for the $p=5$ search. A cold
reasoner asked to certify the bound declined honestly.

The honest window is
\[
18\ \le\ \dav(\Heis_{343})\ \le\ 24 ,
\]
the lower bound from $x^6y^6v^6$ and the upper from $\dav+1\le\Dord(\Heis_{343})=4\cdot7-3=25$
(\S\ref{sec:reconcile}, \cite{GJM2024}). Determining $\dav(\Heis_{343})$ remains open.

\section{Verification}
\label{sec:verif}

All claims were checked by computer; the scripts are self-contained and available.

\begin{itemize}
\item \emph{Group and criterion.} The multiplication, exponent $p$, and $[x,y]=v$ were
checked for $p=3,5$. The closed-form criterion (Lemma~\ref{lem:crit}) was cross-checked
against the brute-force group product on $3000$ random subsequences each at $p=3$ and
$p=5$ (biased toward zero-quotient-sum, so the $\Om$-membership branch is exercised):
zero mismatches.
\item \emph{Calibration.} An exhaustive canonical search reproduces
$\dav(\Heis_{27})=6$ (no product-one-free sequence of length $7$ exists; $409{,}768$
nodes), matching~\cite{GodaraSarkar2025}, with witness $x^2y^2v^2$.
\item \emph{Lower bound.} $x^4y^4v^4$ is product-one-free and admits no product-one-free
single-element extension (Proposition~\ref{prop:lower}).
\item \emph{Auxiliary lemmas.} The Chevalley--Warning forcing statement (Remark~\ref{rem:cw})
on $4000$ random plus adversarial $13$-term multisets; Omega-facts (a) and (b) by
exhaustive enumeration ($93{,}600$ collinear cases of sizes $2$--$5$; $480$ non-collinear
zero-sum triples) and (c) over all $18{,}900$ zero-sum spanning size-$6$ blocks; the
central subset-sum growth (Lemma~\ref{lem:central}) over all zero-sum-free sequences over
$\F_5^\times$ (confirming maximal length $4$, and $\Sig=\F_5$ at length $4$); and the line
cap (Lemma~\ref{lem:linecap}), confirming each $A_L\cong C_5^2$ for the six lines. No
violations.
\item \emph{Spread bound.} Proposition~\ref{prop:spread} was verified by an exhaustive,
memory-flat search in C, run to completion on all four strata $z=0,1,2,3$ with zero bad
leaves (node counts as tabulated). The verdict was independently reproduced: a search
using feasibility pruning visits fewer nodes ($736{,}711$, $1{,}425{,}375$,
$2{,}523{,}915$, $4{,}213{,}665$ for $z=3,2,1,0$) and finds the same zero bad leaves, and
a slower prototype of the same enumeration agrees on the strata it completes. An early
version of the C kernel used a single block-scratch array shared across the mutually
recursive spread and block-enumeration routines; nested calls corrupted it, producing
spurious bad leaves. The bug was masked in the $z=3$ stratum (whose requirement never
triggers the nested recursion) and surfaced only at $z=2$---so running every stratum, not
just the cheapest, was load-bearing. After making the scratch array call-local, all four
strata pass.
\end{itemize}

The result is not formalised in a proof assistant. The independent reproduction of the
load-bearing search, together with the machine-checked lemmas and the elementary proofs
of \S\ref{sec:crit}--\S\ref{sec:lemmas}, constitutes the reproducibility we offer; a
formal verification is left to future work.

\section*{Attribution note}
\label{sec:attrib}
The results were obtained by an AI research collaboration in which an AI system occupied
the principal research seat under human direction. The problem selection and framing,
the verification apparatus (the group and criterion checks, the calibration, the
auxiliary-lemma checks, and the C and prototype implementations of the spread search),
the independent reproduction of the load-bearing computation, the reconciliation with the
ordered Davenport constant, the openness and novelty audit, and this write-up were
produced by Claude (Anthropic). The reduction of the upper bound to the lemma chain and
the spread bound, and the precise (corrected) hypotheses of
Lemmas~\ref{lem:central}--\ref{lem:forcing} and the $p=7$ obstruction, were produced by
a cold, tool-free GPT-5.5 reasoner; in the course of the work it corrected two errors in
the initial skeleton---the line cap (it is $m_L+z\le2p-2$, not a universal $m_L\le2p-3$)
and the false claim that a non-collinear size-$5$ block is automatically wide. The human
author takes responsibility for the submission; the AI contributions are disclosed here
in the spirit of recent practice for machine-assisted results.

\end{document}